\documentclass[12pt]{article}
\usepackage{amsmath,amssymb}
\usepackage{amsthm}
\usepackage{enumitem}
\usepackage{amsmath}

\def\widebreve#1{\mathop{\vbox{\m@th\ialign{##\crcr\noalign{\kern3\p@}%
      \brevefill\crcr\noalign{\kern3\p@\nointerlineskip}%
      $\hfil\displaystyle{#1}\hfil$\crcr}}}\limits}
\def\brevefill{$\m@th \setbox\z@\hbox{$\braceld$}%
  \bracelu\leaders\vrule \@height\ht\z@ \@depth\z@\hfill\braceru$}
\makeatletter
\newtheorem{theorem}{Theorem}[section]
\newtheorem*{theorem*}{Theorem}
\newtheorem{proposition}[theorem]{Proposition}
\newtheorem{corollary}[theorem]{Corollary}
\newtheorem{lemma}[theorem]{Lemma}

\newtheorem{hypothesis}[theorem]{Hypothesis}
\theoremstyle{definition}

\newtheorem*{remark*}{Remark}

\newtheorem*{observation*}{}
\makeatletter
\@addtoreset{equation}{section}
\makeatother

\newcommand{\noin}{\noindent}

\providecommand{\AMS}{$\mathcal{A}$\kern-.1667em%
\lower.25em\hbox{$\mathcal{M}$}\kern-.125em$\mathcal{S}$}

\title{Homomorphism extension problem for subdirect products of finite groups} 

\author{\large İsmail Alperen Öğüt \footnote{e-mail iaogut@ankara.edu.tr}\\
\mbox{}\\
Department of Electronics and Automation \\
Ankara University \\
Vocational School of 1st Organized Industrial Zone, Ankara, Turkey \\
\mbox{}}
\begin{document}
\title{Homomorphism extension problem for subdirect products of finite groups}

\maketitle

\small
\begin{abstract}
\noin Motivated by the simplification of decomposition formulas for fibred bisets, we study the homomorphism extension problem for subdirect products of finite groups when the codomain is an abelian group satisfying certain hypothesis. We prove that every homomorphism of subdirect products whose Goursat quotients have trivial Schur multipliers is extensible. We also examine the case where subdirect products contain a twisted diagonal subgroup and investigate the functoriality of the extensibility property.
\smallskip\\
\noin 2010 {\it Mathematics Subject Classification:}
Primary 20C15, Secondary 16B50.

\smallskip
\noin {\it Keywords:} Goursat quotient, Schur multiplier, subdirect product, homomorphism extension, fibred biset. 
\end{abstract}
\section{Introduction}
Functorial approaches in the finite group representation theory often involve categories where the morphisms are associated with the subgroups of direct products of finite groups. Some important examples of such categories are the biset \cite{BOU96} and the fibred biset category \cite{BC18} and the category of the diagonal $p$-permutation modules \cite{BY22}. Besides being interesting objects on their own, such categories provide tools for studying many representation theoretic notions by realizing them as their functors to the module category. 

One common approach for studying these categories is decomposing their morphisms as products of simpler ones. In the case of the biset category, the decomposition is into five basic morphisms: induction, inflation, isogation, deflation and restriction. The fibred biset category, on the other hand, is more general. It is obtained by equipping the morphisms of the biset category with one dimensional characters. Although this added structure enriches the category, it also brings certain inconveniences. In particular, the general decomposition formula one gets is not as neat as the one for the non-fibred case. As shown in \cite[Lemma 5.1]{BY21}, obtaining a neat decomposition via the factorization of fibred bisets over the Goursat quotient is equivalent to the property of character extension. That is, for the associated pair $(U,\varphi)$ where $U$ is a subgroup of a direct product of finite groups $G\times H$, the character $\varphi$ must extend to $G\times H$.

By considering particular subcategories of the fibred biset category, it is possible to obtain an environment where the decomposition is simplified. For the full subcategory on the abelian groups, it is possible to realize this simplification \cite[Theorem $1$]{CY19}, whereas by restricting to cyclic groups of prime order, an even simpler decomposition can be obtained \cite[Theorem $3.4$]{AC20}, both of which are cases where the extension is possible for every associated pair. In \cite[Section $5$]{BY21}, this extension problem is studied in a more general setting where the only hypothesis on the codomain is that the group of homomorphisms from any finite group $G$ into it is finite. However, instead of exploring the classes of finite groups where the extension is possible for every associated pair, the authors focus on conditions that ensure extensibility for a particular pair, characterizing it in terms of the nonvanishing of certain equations involving primitive idempotents.

The above examples motivate us to explore the necessary and sufficient conditions for every homomorphism of $U$ to extend to $G\times H$. Our codomains will be abelian groups satisfying the following standard hypothesis.
\begin{hypothesis}\label{hyp}\cite[Hypothesis 10.1]{BC18}
There exists a unique set of primes $\pi$ such that for every $n\in \mathbb{N}$, the $n$-torsion part of $A$ is cyclic of order $n_\pi$. 
\end{hypothesis}

Let us clarify why we focus only on subdirect products, (subgroups $U\leq G\times H$ for which the projections onto the first and the second coordinates are $G$ and $H$, respectively) rather than every subgroup $U\leq G\times H$. For subgroups that are not subdirect products, once the extension to the direct product of the projections is achieved, it remains checking the one dimensional extension (between each projection subgroup and $G$ and $H$, respectively) which is a special case of the main problem. Interestingly, a result that solves the character extension problem for subdirect products can be applied to \cite[Lemma 5.1]{BY21} to obtain an explicit description for idempotent composition.

To make our study of character extension problem precise, we introduce the following notions of extensibility. A subgroup $U\leq G\times H$ is called \textbf{$A$-extensible} if every $\varphi\in \mathrm{Hom}(U,A)$ extends to $G\times H$. In the case where $U$ is $A$-extensible for every $A$ satisfying Hypothesis \ref{hyp} we say $U$ is \textbf{extensible}. When  $U$ is $A$-extensible for every abelian group $A$ satisfying the hypothesis for a set $\pi$ consisting of a single prime $p$, we say $U$ is \textbf{$p$-extensible}.

Our main tool will be a necessary and sufficient condition for extensibility in terms of the commutator and the kernel subgroups (Theorem \ref{thm3.2}). Following its establishment, we will then study the implications of it on the Goursat quotient (denoted by $q(U)$, see the beginning of the next section for a definition) and prove the following main result of the paper. The definition of the coinflation map can be found in Section 4. 
\enlargethispage{2\baselineskip}
\begin{theorem}\label{mainthm1}
Let $p$ be a prime, $G,H$ finite groups, $U\leq G\times H$ a subdirect product. Then $U$ is $p$-extensible if one of the following conditions hold:
\begin{enumerate}[label=(\roman*)]
\item The $p$-primary part of the Schur multiplier of $q(U)$ is trivial.
\item The $p$-primary component of the coinflation map is surjective.
\end{enumerate}
\end{theorem}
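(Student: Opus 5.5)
We use the standard Goursat setup for $U$: let $K_1=\{g\in G:(g,1)\in U\}$ and $K_2=\{h\in H:(1,h)\in U\}$. These are normal in $G$ and $H$, $K_1\times K_2\trianglelefteq U$, and $q(U)=U/(K_1\times K_2)$, which is isomorphic to $G/K_1$ and to $H/K_2$.

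The first step is a general criterion. Fix $A$ satisfying Hypothesis~\ref{hyp} with $\pi=\{p\}$. The relevant values of any $\varphi\in\mathrm{Hom}(U,A)$ lie in a finite cyclic $p$-group. Because $A$ restricted to finite $p$-torsion is injective enough (cyclic torsion of every $p$-power order), a homomorphism defined on a subgroup of $(G\times H)/[G\times H,G\times H]$ extends to the whole quotient exactly when it is trivial on the relevant part. So $\varphi$ extends to $G\times H$ if and only if $\varphi$ vanishes on $U\cap[G\times H,G\times H]=U\cap([G,G]\times[H,H])$. Hence $U$ is $p$-extensible if and only if the $p$-part of
\[
\bigl(U\cap([G,G]\times[H,H])\bigr)\big/[U,U]
\]
is trivial. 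This is the content of Theorem~\ref{thm3.2}, which I would invoke directly.

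The second step is to reduce modulo $K_1\times K_2$. Take $(g,h)\in U\cap(G'\times H')$. Its image in $q(U)$ lies in the commutator subgroup $q(U)'$, since $G'$ maps onto $(G/K_1)'$. We can therefore adjust $(g,h)$ by an element of $[U,U]$ so that it lands in $K_1\times K_2$. Because $U$ is subdirect, the commutators $[U,U]$ contain $[K_1,G]\times[K_2,H]$. After this adjustment, what remains is to control
\[
(K_1\cap G')\times(K_2\cap H') \quad\text{modulo}\quad [K_1,G]\times[K_2,H],
\]
together with the gluing through $U$. The quotients $(K_1\cap G')/[K_1,G]$ and $(K_2\cap H')/[K_2,H]$ are, by the Hopf formula, images of $M(G/K_1)$ and $M(H/K_2)$, that is, of $M(q(U))$. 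This is the five-term exact sequence
\[
M(G)\to M(G/K_1)\to K_1/[K_1,G]\to G^{ab}\to (G/K_1)^{ab}.
\]

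For condition (i), I would show that the obstruction group $(U\cap(G'\times H'))/[U,U]$ is a quotient of a subgroup of $M(q(U))$, or of a sum of copies of it. Then trivial $p$-part of $M(q(U))$ kills the $p$-part of the obstruction. The cleanest route is to use $U$ itself as the middle term. Apply the five-term sequence to $K_1\times K_2\trianglelefteq U$, with quotient $q(U)$:
\[
M(q(U))\to (K_1\times K_2)/[K_1\times K_2,U]\to U^{ab}\to q(U)^{ab}\to 0.
\]
Elements of $U\cap(G'\times H')$, after the adjustment in the second step, map to $U^{ab}$ inside the image of $(K_1\times K_2)/[K_1\times K_2,U]$. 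Moreover, they die in $G^{ab}\times H^{ab}$. Combining this with the analogous sequences for $G$ and $H$ should place the obstruction in the image of $M(q(U))$. This gives (i).

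For (ii), I take the coinflation map to be the natural map comparing $M(G)\oplus M(H)$, or rather $M(U)$, with $M(q(U))$ via the two projections. If its $p$-primary part is surjective, then by exactness the $p$-part of the connecting image described above is zero, and the obstruction vanishes again. I would match this to the Section~4 definition once it is available.

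The main obstacle is the diagram chase in the last two steps. It requires carefully identifying the obstruction with the cokernel of a map out of $M(q(U))$, and tracking how $[U,U]$ differs from $[G,G]\times[H,H]$ on $K_1\times K_2$. The danger is that $[K_1\times K_2,U]=[K_1,G]\times[K_2,H]$ only holds thanks to subdirectness, so that equality must be verified first.
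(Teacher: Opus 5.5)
\textbf{Part (i).} This part is fine and is essentially the paper's argument. Your Step 2 uses the five-term sequence for $K_1=k_1(U)\trianglelefteq G$. It identifies $\mathrm{im}\,\beta=(K_1\cap G')/[K_1,G]\cong M(q(U))/\mathrm{im}\,\alpha$. The sandwich $[K_1,G]\le k_1(U')\le K_1\cap G'$ (Lemma~\ref{lem23}(iii)) together with Corollary~\ref{cor2.7} then finishes. You carry both the $G$- and $H$-sides, while the paper uses only the $G$-side. For (i) this is harmless, and the detour through the sequence for $K_1\times K_2\trianglelefteq U$ is not needed.

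\textbf{Part (ii).} Here there is a genuine gap. In the paper the coinflation map is the first arrow $\alpha_G\colon M(G)\to M(G/k_1(U))$ of exactly the sequence you wrote in Step 2. With that map, (ii) takes one line. If $\alpha_G$ is surjective on $p$-parts, the $p$-part of $(K_1\cap G')/[K_1,G]$ vanishes, so $[K_1,G]$, and hence $k_1(U')$, contains a Sylow $p$-subgroup of $K_1\cap G'$. This step uses the one-sided form of the criterion (Theorem~\ref{thm3.2}, Corollary~\ref{cor2.7}: only $k_1(U')$ versus $K_1\cap G'$ matters). Your two-sided bound, with the obstruction a quotient of $\mathrm{coker}\,\alpha_G\times\mathrm{coker}\,\alpha_H$, would only conclude under surjectivity on both sides.

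Instead you take the coinflation to be $M(G)\oplus M(H)\to M(q(U))$ or $M(U)\to M(q(U))$, and conclude ``by exactness''. That step does not hold for either map.

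\emph{The sum map $\alpha_G+\alpha_H$.} Its surjectivity does not make $\beta_G$ or $\beta_H$ vanish. Take $G=Q_8$, $K_1=Z(Q_8)$, $H=C_2\times C_2$, $K_2=1$, and $U$ the graph of $Q_8\to C_2\times C_2$. Then $\alpha_G=0$, $\alpha_H$ is an isomorphism, and $\mathrm{coker}\,\alpha_G\cong C_2$, so your bound gives nothing. Yet $k_1(U')=Z(Q_8)=K_1\cap G'$, so $U$ is extensible.

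What is missing is the compatibility $\beta_G(\mathrm{im}\,\alpha_H)\subseteq k_1(U')/[K_1,G]$. It follows from a free presentation $F\twoheadrightarrow U$: a Hopf representative in $F'$ of a class in $\mathrm{im}\,\alpha_H$ dies in $H$, so it maps to some $(g,1)\in U'$. With this, one actually gets
\[
(K_1\cap G')/k_1(U')\cong M(q(U))/(\mathrm{im}\,\alpha_G+\mathrm{im}\,\alpha_H).
\]
So your first guess is an exact criterion, sharper than (ii). However, it needs this argument, not exactness.

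\emph{The map $M(U)\to M(q(U))$.} Exactness of the sequence for $U$ only controls $((K_1\times K_2)\cap U')/([K_1,G]\times[K_2,H])$, which is the wrong quotient. The correct reason this hypothesis suffices is that the map factors through $\alpha_G$.
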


Obtaining a necessary and sufficient condition for the extension of the homomorphisms alone would not be very useful for functorial applications as we would like these conditions to be preserved under the composition of the category in question. There are several cases where the $p$-part of the Schur multiplier vanishes (we refer the reader to \cite{K87} for a comprehensive source). The case of Goursat quotients having cyclic Sylow $p$-subgroups for each prime divisor of the quotient is an example of a property that is \emph{functorial} (it survives the categorical composition), which we include as Corollary $\ref{cor4.1}$. We note that it is highly likely that there are more interesting or general classes that are functorial. However, discovery of these is expected to require a delicate combination of existing results for the vanishing of the Schur multiplier.

In the last section, we study another class of subgroups for which our methods are suitable, in particular, the class  defined by the condition of containment of a twisted diagonal subgroup. The main result (Theorem \ref{thm5.6}) of the section provides a convenient condition for determining whether the extensibility is preserved under the categorical composition of particular elements from the class. 
 
\section{Preliminaries}
Let $G$ and $H$ be finite groups. Let us recall the parameters that determine  $U\leq G\times H$.
\begin{lemma}[Goursat]
  There exists a one to one correspondence between the subgroups $U\leq G\times H$ and the quintuples $\Delta(p_1(U), k_1(U),\varphi, k_2(U), p_2(U))$ where
  \begin{align*}
    p_1(U) &= \lbrace g\in G| \exists h\in H, (g,h) \in U \rbrace\\
    k_1(U) &= \lbrace g\in G| (g,1) \in U \rbrace\\
    p_2(U) &= \lbrace h\in H| \exists g\in G, (g,h) \in U\rbrace \\
    k_2(U) &= \lbrace h\in H| (1,h) \in U \rbrace \\
  \end{align*}
  and $\varphi : p_1(U)/k_1(U) \rightarrow p_2(U)/k_2(U)$ is the isomorphism given by $\varphi(g,h) = (gk_1(U),hk_2(U))$ for every $(g,h)\in U$.
\end{lemma}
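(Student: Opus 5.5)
The plan is to construct the map $U\mapsto \Delta(p_1(U),k_1(U),\varphi,k_2(U),p_2(U))$ explicitly, write down an inverse map, and check that the two compositions are identities. All verifications are elementary manipulations in $G\times H$, so the work is mostly bookkeeping.

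\textbf{Step 1: the quintuple is well formed.} Since $p_1,p_2$ are the images of $U$ under the coordinate projections, they are subgroups. Likewise $k_1(U)\times 1=U\cap (G\times 1)$ and $1\times k_2(U)=U\cap(1\times H)$, so $k_1,k_2$ are subgroups. For normality, take $k\in k_1(U)$ and $g\in p_1(U)$, and choose $h$ with $(g,h)\in U$. Then $(g,h)(k,1)(g,h)^{-1}=(gkg^{-1},1)\in U$, so $k_1(U)\trianglelefteq p_1(U)$; the same argument gives $k_2(U)\trianglelefteq p_2(U)$.

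\textbf{Step 2: $\varphi$ is a well-defined isomorphism.} I read the statement as defining $\varphi(gk_1(U))=hk_2(U)$ whenever $(g,h)\in U$. I expect the only point needing care to be that this does not depend on the choices.
\begin{itemize}
\item If $(g,h),(gk,h')\in U$ with $k\in k_1(U)$, then $(gk,h)=(g,h)(k,1)\in U$. Hence $(gk,h)^{-1}(gk,h')=(1,h^{-1}h')\in U$, so $h'\in hk_2(U)$.
\item $\varphi$ is defined on all of $p_1(U)/k_1(U)$ by the definition of $p_1$.
\item It is a homomorphism because $U$ is closed under multiplication.
\item It is surjective by the definition of $p_2$.
\item For injectivity: if $\varphi(gk_1)=k_2$, pick $(g,h)\in U$ with $h\in k_2(U)$. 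Then $(g,1)=(g,h)(1,h^{-1})\in U$, so $g\in k_1(U)$.
\end{itemize}

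\textbf{Step 3: the inverse map.} Given any quintuple $(P_1,K_1,\psi,K_2,P_2)$ with $K_i\trianglelefteq P_i$ and $\psi:P_1/K_1\to P_2/K_2$ an isomorphism, set
\[
U_\psi=\{(g,h)\in P_1\times P_2 : \psi(gK_1)=hK_2\}.
\]
This is a subgroup because it is the preimage of the graph of $\psi$ under $P_1\times P_2\to P_1/K_1\times P_2/K_2$. Its parameters are recovered as follows.
\begin{itemize}
\item $p_1(U_\psi)=P_1$, using surjectivity of $P_2\to P_2/K_2$; symmetrically $p_2(U_\psi)=P_2$.
\item $k_1(U_\psi)=K_1$, using injectivity of $\psi$; symmetrically $k_2(U_\psi)=K_2$.
\item The associated isomorphism is $\psi$ itself.
\end{itemize}

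\textbf{Step 4: the composite starting from $U$.} It remains to show $U=U_\varphi$ for the quintuple attached to $U$. The inclusion $U\subseteq U_\varphi$ is the definition of $\varphi$. Conversely, let $(g,h)\in U_\varphi$. Pick $(g,h')\in U$. Then $h'k_2(U)=\varphi(gk_1(U))=hk_2(U)$, so $(g,h)=(g,h')(1,h'^{-1}h)\in U$.

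These two composites being identities establishes the bijection.
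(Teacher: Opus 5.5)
Your proof is correct and complete. You rightly read the paper's loose notation as $\varphi(gk_1(U))=hk_2(U)$ for $(g,h)\in U$, and the quintuples as abstract data $(P_1,K_1,\psi,K_2,P_2)$ with $K_i\trianglelefteq P_i$; your Steps 3 and 4 then show that both composites are identities. The paper gives no proof of this lemma and quotes it as the classical Goursat lemma, so your argument is the standard verification it implicitly relies on.
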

The isomorphism class $p_1(U)/k_1(U)$ will be denoted by $q(U)$. We will refer to it as the Goursat quotient of $U$. 

Given finite groups $F,G,H$, and subgroups $U\leq F\times G, V\leq G\times H$ the \textbf{$\ast$-product} of $U$ and $V$ is the subgroup $U\ast V \leq F\times H$ given by
$$
\lbrace (u,v)\in p_1(U)\times p_2(V)|\exists x\in p_2(U)\cap p_1(V), (u,x)\in U, (x,v)\in V\rbrace.
$$
For the fibred biset category, functoriality of a property (assuming it does not pose any restriction on the characters) is equivalent with its preservation under the $\ast$-product.

Notice that $k_1(U)\leq k_1(U\ast V)$ and $p_1(U)\geq p_1(U\ast V)$. This yields the following.
\begin{proposition}{\label{prop2.2}}
    Let $U, V$ and $W$ be finite groups such that $W = U\ast V$. Then 
    $$
    \left[q(U)\right] \geq \left[q(W)\right]\leq \left[q(V)\right].
    $$
\end{proposition}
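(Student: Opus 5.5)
The plan is to compare $q(W)$ with $q(U)$ and $q(V)$ through the Goursat parameters, using the two containments noted just before the statement. I read the notation $[q(U)]\geq [q(W)]$ as "$q(W)$ is isomorphic to a subquotient of $q(U)$", and similarly for $V$. Throughout I take $U\leq F\times G$, $V\leq G\times H$ and $W=U\ast V\leq F\times H$.

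\emph{First coordinate.} The first step is to verify $k_1(U)\leq k_1(W)$ and $p_1(W)\leq p_1(U)$.
\begin{itemize}
\item If $f\in k_1(U)$, then $(f,1)\in U$. Since $(1,1)\in V$, the element $x=1$ witnesses $(f,1)\in W$, so $f\in k_1(W)$.
\item If $f\in p_1(W)$, there are $x$ and $h$ with $(f,x)\in U$ and $(x,h)\in V$, so $f\in p_1(U)$.
\end{itemize}
This gives the chain $k_1(U)\leq k_1(W)\leq p_1(W)\leq p_1(U)$. Since $k_1(W)$ is normal in $p_1(W)$, the quotient $p_1(W)/k_1(W)$ is a quotient of the subgroup $p_1(W)/k_1(U)$ of $p_1(U)/k_1(U)=q(U)$. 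Hence $q(W)\cong p_1(W)/k_1(W)$ is a subquotient of $q(U)$, which is the left inequality.

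\emph{Second coordinate.} For $V$ I need the analogous statement on the second coordinate. The symmetric argument gives $k_2(V)\leq k_2(W)$ and $p_2(W)\leq p_2(V)$. Hence $p_2(W)/k_2(W)$ is a subquotient of $p_2(V)/k_2(V)$.

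\emph{Transfer to $q(W)$.} The Goursat isomorphism identifies $p_1(W)/k_1(W)$ with $p_2(W)/k_2(W)$, and likewise $p_1(V)/k_1(V)$ with $p_2(V)/k_2(V)$. Therefore $q(W)$ is isomorphic to a subquotient of $q(V)$, which is the right inequality.

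The only delicate point is the interpretation of the order on isomorphism classes. If "$\geq$" instead means a divisibility or order relation, the result follows immediately from the subquotient statement. If it is meant in the sense of sections, this argument gives exactly that, so no real obstacle is expected.
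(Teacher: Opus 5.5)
Your proof is correct and follows the paper's argument. The paper only notes that $k_1(U)\leq k_1(U\ast V)$ and $p_1(U\ast V)\leq p_1(U)$; you additionally check these containments, give the symmetric second-coordinate containments for $V$, and use the Goursat identification $p_1/k_1\cong p_2/k_2$, all of which the paper leaves implicit. Your reading of $\geq$ as ``has as a subquotient'' is also the one needed later for Corollary~\ref{cor4.1}.
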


For a finite group $G$, let $G'$ denote its commutator subgroup and $\overline{G}\cong G/G'$ its abelianization. We include the following for ease of reference.
\begin{proposition}\label{prop22}
  Let $G$ be a finite group, $N\trianglelefteq G$.Then 
  $$\bigg(\dfrac{G}{N}\bigg)'\cong \dfrac{G'}{G'\cap N}.$$
\end{proposition}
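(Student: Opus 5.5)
We identify $(G/N)'$ with the image of $G'$ under the canonical projection $\pi\colon G\to G/N$, and then apply the second isomorphism theorem.

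First we compute commutators in the quotient. For $g,h\in G$ we have
$$[gN,hN]=g^{-1}h^{-1}ghN=[g,h]N=\pi([g,h]),$$
because $\pi$ is a homomorphism. Every element of $G/N$ has the form $gN$, so the generating set of $(G/N)'$ is exactly $\pi$ applied to the generating set of $G'$. The image of a subgroup generated by a set $S$ under a homomorphism is the subgroup generated by the image of $S$. Hence $(G/N)'=\pi(G')=G'N/N$. Here $G'N$ is a subgroup because $N$ is normal, and it contains $N$, so the quotient $G'N/N$ makes sense.

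Next we apply the second isomorphism theorem to the subgroup $G'\le G$ and the normal subgroup $N\trianglelefteq G$. This gives $G'\cap N\trianglelefteq G'$ and
$$G'N/N\cong G'/(G'\cap N).$$
Concretely, the restriction $\pi|_{G'}\colon G'\to G'N/N$ is surjective and has kernel $G'\cap N$. Combining this with the first step yields $(G/N)'\cong G'/(G'\cap N)$.

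The only step needing any care is the first one, namely checking that the generated subgroups match and not merely the generating sets. This follows immediately from $\pi$ being a homomorphism, so no genuine obstacle arises.
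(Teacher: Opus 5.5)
Your proof is correct and follows essentially the same route as the paper. You restrict the canonical projection to $G'$, observe that it maps onto $(G/N)'$ with kernel $G'\cap N$, and conclude by the isomorphism theorems, additionally justifying the surjectivity (which the paper simply calls clear) by checking that commutators and the subgroups they generate correspond under $\pi$.
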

\begin{proof}
	The projection homomorphism
	$$
	\pi: G'\rightarrow (G/N)'
	$$
	is clearly surjective and its kernel is exactly $G'\cap N$, which yields the result.
\end{proof}
 We are going to be using the following facts regarding the structure of $U$ and its commutator subgroup.
\begin{lemma}\label{lem23}
  Let $G$ and $H$ be finite groups, $U\leq G\times H$ and $i,j\in \lbrace 1,2\rbrace, i\not= j$. Then the following holds:
  \begin{enumerate} [label=(\roman*)]
    \item $|U| = |p_i(U)|\cdot |k_j(U)|$
    \item $p_i(U') = (p_i(U))'$
    \item $[k_i(U),p_i(U)]\leq k_i(U')\leq (p_i(U))'\cap k_i(U).$
  \end{enumerate}
\end{lemma}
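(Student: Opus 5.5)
We prove the three parts directly from the definitions of $p_i$, $k_i$ and the commutator subgroup. By symmetry it suffices to treat $i=1$, $j=2$; the case $i=2$, $j=1$ follows by interchanging the roles of $G$ and $H$.

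For (i), consider the surjective homomorphism $p_1|_U\colon U\to p_1(U)$, $(g,h)\mapsto g$. Its kernel is
$$\lbrace (1,h)\mid (1,h)\in U\rbrace = 1\times k_2(U)\cong k_2(U).$$
The first isomorphism theorem then gives $|U| = |p_1(U)|\cdot|k_2(U)|$.

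For (ii), we use that the projection $p_1\colon U\to p_1(U)$ is a surjective homomorphism, and that a surjective homomorphism maps the commutator subgroup of its domain onto the commutator subgroup of its image. Indeed, it sends the commutator $[(g,h),(g',h')]$ to $[g,g']$. Every commutator of $p_1(U)$ has this form, because each element of $p_1(U)$ lifts to $U$. Hence $p_1(U')$ is the subgroup generated by the commutators of $p_1(U)$, which is $(p_1(U))'$.

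For (iii) we prove two inclusions.

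\emph{The left inclusion.} Let $k\in k_1(U)$ and $g\in p_1(U)$, and choose $h$ with $(g,h)\in U$. Then
$$[(k,1),(g,h)] = ([k,g],[1,h]) = ([k,g],1)\in U'.$$
So $[k,g]\in k_1(U')$. Since $k_1(U')$ is a subgroup, it contains the subgroup generated by all such commutators, which is $[k_1(U),p_1(U)]$.

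\emph{The right inclusion.} Let $g\in k_1(U')$, so $(g,1)\in U'\le U$. Then $g\in k_1(U)$, and also $g\in p_1(U')=(p_1(U))'$ by (ii). Hence $g\in (p_1(U))'\cap k_1(U)$.

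None of the steps is a real obstacle. The only point needing care is in (ii), where one must note that every commutator of $p_1(U)$ lifts to a commutator of $U$; surjectivity onto generators then gives equality rather than mere containment.
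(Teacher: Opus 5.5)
Your proof is correct and follows essentially the same route as the paper: (i) by counting the fibres of the first projection (you phrase this via the first isomorphism theorem with kernel $1\times k_2(U)$), (ii) from the fact that a surjective homomorphism maps the commutator subgroup onto the commutator subgroup, and (iii) via the commutator $[(k,1),(g,h)]=([k,g],1)$ together with $k_1(U')\leq k_1(U)$ and $k_1(U')\leq p_1(U')$. Your version is slightly more careful than the paper's. The paper writes this commutator as $[(k,g),1]$ and leaves implicit the passage from generators to the subgroup $[k_1(U),p_1(U)]$.
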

\begin{proof}
	We will provide the proofs for the case $i=1$, the other case is follows from symmetry.
	
	The first assertion follows from the fact that given $(g,h)\in U$, for each $k\in k_2(U)$, one has $(g,hk)\in U$ and if $(g,h')\in U$, then $(1,h^{-1}h')\in U, h^{-1}h'\in k_2(U)$, so there is exactly $|k_2(U)|$ elements in $U$ whose first coordinate is $g$. 
	
	The second assertion is a consequence of the fact that for any homomorphism $\varphi$ from $U$, we have $$
	\varphi(U') = (\varphi(U))'.
	$$
	
	For the first inclusion of the last statement, let $k\in k_1(U), (g,h)\in U$. Then $$[(k,1),(g,h)] = [(k,g),1]\in k_1(U').$$ The second inclusion follows from the fact that $k_1(U)\geq k_1(U')\leq p_1(U')=(p_1(U))'$.
	
\end{proof}
\section{The Necessary and Sufficient Condition for Extensibility}
In this section we will obtain the necessary and sufficient conditions for every homomorphism of $U$ to extend to $G\times H$. Recall that a \textbf{subdirect product} of $G$ and $H$ is a subgroup $U\leq G\times H$ such that $p_1(U)= G, p_2(U) = H$.
\begin{lemma}
  Let $G$ and $H$ be finite groups, $U\leq G\times H$ a subdirect product, $A$ any (non necessarily finite or abelian) group. The kernel of the natural projection 
  $$
  \rho : \mathrm{Hom}(G\times H, A) \rightarrow \mathrm{Hom}(U, A)
  $$
  is given by
  $$
  \mathrm{ker}(\rho) \cong \mathrm{Hom}(q(U), A).
  $$
\end{lemma}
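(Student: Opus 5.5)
The plan is to identify $\ker(\rho)$ with $\mathrm{Hom}(q(U),A)$ by restricting to the first factor. I read $q(U)$ via the Goursat quintuple as $G/k_1(U)$, using that $U$ is subdirect, so $p_1(U)=G$ and $p_2(U)=H$. Let $\varphi\colon G/k_1(U)\to H/k_2(U)$ be the Goursat isomorphism.

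An element $f\in \mathrm{Hom}(G\times H,A)$ is the same as a pair $\alpha=f|_{G\times 1}$, $\beta=f|_{1\times H}$ with $\alpha(G)$ and $\beta(H)$ commuting elementwise, and $f(g,h)=\alpha(g)\beta(h)$.

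First I determine what $f\in\ker(\rho)$ forces. For $k\in k_1(U)$ we have $(k,1)\in U$, so $\alpha(k)=1$, and symmetrically $\beta(k_2(U))=1$. Hence $\alpha$ and $\beta$ factor through $\bar\alpha\colon G/k_1(U)\to A$ and $\bar\beta\colon H/k_2(U)\to A$. For $(g,h)\in U$, triviality on $U$ gives $\beta(h)=\alpha(g)^{-1}$, that is,
$$\bar\beta\circ\varphi=\iota\circ\bar\alpha,$$
where $\iota$ is inversion. I define
$$\Theta\colon \ker(\rho)\to \mathrm{Hom}(q(U),A),\qquad f\mapsto\bar\alpha .$$
It is a homomorphism when $A$ is abelian, and it is injective in all cases. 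Indeed, $\bar\alpha$ determines $\alpha$, and since $p_1(U)=G$ every $h\in H$ has some $g$ with $(g,h)\in U$, so $\bar\alpha$ also determines $\beta$.

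For surjectivity, given $\psi\in\mathrm{Hom}(q(U),A)$ I set
$$f_\psi(g,h)=\psi(gk_1(U))\,\psi\bigl(\varphi^{-1}(hk_2(U))\bigr)^{-1}.$$
By construction $f_\psi$ is trivial on $U$ and restricts to $\psi$ on $G$. Two things must be checked.

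First, $h\mapsto \psi(\varphi^{-1}(hk_2(U)))^{-1}$ must be a homomorphism. Second, it must commute with $\psi(G)$, so that $f_\psi$ is a homomorphism on $G\times H$. Both reduce to $\psi(q(U))$ being abelian.

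This is the main obstacle. The relation $\beta(H)=\alpha(G)^{-1}=\alpha(G)$, combined with the commuting requirement, shows that any $f\in\ker(\rho)$ has $\bar\alpha$ with abelian image. So surjectivity holds exactly onto those $\psi$ with abelian image, which for abelian $A$ (the case of Hypothesis \ref{hyp}, where $\mathrm{Hom}$ is a group) is all of $\mathrm{Hom}(q(U),A)$.

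I would therefore carry out the argument for abelian $A$, where $\Theta$ is a group isomorphism. For general $A$, I would record that the same $\Theta$ is a bijection of $\ker(\rho)$ onto the homomorphisms $q(U)\to A$ with abelian image. That is the precise content the displayed isomorphism can carry when $A$ is arbitrary.
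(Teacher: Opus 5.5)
Your proposal is correct and is essentially the paper's argument: both kill $k_1(U)$ and $k_2(U)$, recover the $H$-component from the $G$-component via $\beta(h)=\alpha(g)^{-1}$ for $(g,h)\in U$, and use commutativity of $A$ to see that this yields a homomorphism on $H$ commuting with $\alpha(G)$. One small slip: injectivity needs $p_2(U)=H$, not $p_1(U)=G$ as you wrote. Your caveat for nonabelian $A$ is also right---there $\ker(\rho)$ matches only the homomorphisms $q(U)\to A$ with abelian image (for $U=\Delta(S_3)\leq S_3\times S_3$ and $A=S_3$ one gets $4$ versus $10$)---and the paper's own proof invokes ``since $A$ is an abelian group'' at exactly this step, so the lemma is only established, and only needed, for abelian $A$.
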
\label{prop24}
\begin{proof}
Let $(\varphi_1 \times \varphi_2)\in \mathrm{Hom}(G\times H,A)$. Suppose for every $(u_1,u_2)\in U$ we have 
$$(\varphi_1\times\varphi_2) (u_1,u_2) = \varphi_1(u_1)\varphi_2(u_2) = 1.$$ Then for every $k_1\in k_1(U), k_2\in k_2(U)$  one has  
$$
\varphi_1(k_1) = 1, \varphi_2(k_2) = 1.
$$
Hence, homomorphisms of $p_1(U)$ and $p_2(U)$ whose product fall into the kernel must factor through $k_1(U)$ and $k_2(U)$, respectively. Moreover, given $\varphi_1\in\mathrm{Hom}(G,A)$ which factors through $k_1(U)$, if there exists $\varphi\in\mathrm{Hom}(H,A)$ such that $\varphi_1\times\varphi_2\in\mathrm{ker}(\rho)$, then for every $(g,h)\in U$, we have $$\varphi_2(h) = \varphi_1(g)^{-1}.$$ Since $U$ is a subdirect product, $\varphi_2$ is completely determined, so if it exists, it must be unique. To see that it exists, notice that in the above equation, every choice of $g$ yields the same value for $\varphi_2(h)$ because $\varphi_1$ factors through $k_1(U)$, hence it defines a function from $H$ to $A$. Since $A$ is an abelian group, this function is actually a homomorphism. Hence, for every such $\varphi_1$, there exists unique $\varphi_2$ factoring through $k_2(U)$ for which $\varphi_1\times\varphi_2\in \mathrm{ker}(\rho)$.
\end{proof}
When $A$ is an abelian group satisfying Hypothesis \ref{hyp}, the above proposition allows us to calculate the order of the image of $\rho$ and one can check the surjectivity of $\rho$ by comparing it with the order of $\mathrm{Hom}(U,A)$. Under Hypothesis \ref{hyp}, the first part of \cite[Proposition $10.4$] {BC18} tells that for any finite abelian group $B$, one has $|\mathrm{Hom}(B,A)|=|B|_\pi$ which yields the following result.

\begin{theorem}\label{thm3.2}
  Let $G$ and $H$ be finite groups, $U\leq G\times H$ a subdirect product, $A$ an abelian group satisfying Hypothesis \ref{hyp} where the set $\pi$ of primes includes every prime divisor of $G$ and $H$. Then the natural projection map $\rho$ is surjective if and only if $G'\cap k_1(U) = k_1(U')$ or equivalently $H'\cap k_2(U) = k_2(U')$.
\end{theorem}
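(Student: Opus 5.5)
Since $\mathrm{Hom}(G\times H,A)$ is finite, I will prove the statement by counting orders. By the preceding lemma, the image of $\rho$ has order $|\mathrm{Hom}(G\times H,A)|/|\mathrm{Hom}(q(U),A)|$. Because $A$ is abelian, every homomorphism from a group $X$ to $A$ factors through $\overline{X}=X/X'$. Since $\pi$ contains every prime divisor of $|G|$ and $|H|$, and $U\le G\times H$, the quoted part of \cite[Proposition 10.4]{BC18} gives $|\mathrm{Hom}(X,A)|=|X/X'|$ for $X\in\{G\times H,\,U,\,q(U)\}$. Thus $\rho$ is surjective if and only if
\[
\frac{|G/G'|\,|H/H'|}{|q(U)/q(U)'|} = |U/U'| .
\]

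Next I express each side through the Goursat data. Take $q(U)=G/k_1(U)$. Proposition \ref{prop22} gives $q(U)'\cong G'/(G'\cap k_1(U))$, so
\[
|q(U)/q(U)'| = \frac{|G|\,|G'\cap k_1(U)|}{|k_1(U)|\,|G'|}.
\]
Lemma \ref{lem23}(i) gives $|U|=|G|\,|k_2(U)|$. Applying Lemma \ref{lem23}(i) to $U'$ and using (ii) with $i=2$ gives $|U'|=|p_2(U')|\,|k_1(U')|=|H'|\,|k_1(U')|$. Hence
\[
|U/U'|=\frac{|G|\,|k_2(U)|}{|H'|\,|k_1(U')|}.
\]
Substituting and cancelling turns the surjectivity condition into
\[
\frac{|H|\,|k_1(U)|}{|G'\cap k_1(U)|}=\frac{|G|\,|k_2(U)|}{|k_1(U')|}.
\]
By Lemma \ref{lem23}(i) applied to $U$ in both ways, $|G|\,|k_2(U)|=|U|=|H|\,|k_1(U)|$. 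So the condition reduces to $|G'\cap k_1(U)|=|k_1(U')|$. Lemma \ref{lem23}(iii) gives the inclusion $k_1(U')\le G'\cap k_1(U)$, so equality of orders is equivalent to $G'\cap k_1(U)=k_1(U')$.

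For the equivalent formulation, I run the same computation with the roles of $G$ and $H$ exchanged. This uses $q(U)\cong H/k_2(U)$ via the Goursat isomorphism, together with the $i=2$ versions of the lemmas, and gives surjectivity if and only if $H'\cap k_2(U)=k_2(U')$. Since both conditions are equivalent to the surjectivity of $\rho$, they are equivalent to each other.

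The main obstacle is the order bookkeeping, in particular choosing correctly which $p_i$ and $k_j$ to use in $|U'|$. The other point to check is that the hypothesis on $\pi$ really gives $|\mathrm{Hom}(X,A)|=|\overline{X}|$ for $X=U$ and $X=q(U)$. This holds because their orders divide $|G|\,|H|$, so every prime dividing $|\overline{X}|$ lies in $\pi$ and $|\overline{X}|_\pi=|\overline{X}|$.
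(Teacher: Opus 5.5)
Your proposal is correct and follows essentially the same route as the paper. Both reduce surjectivity of $\rho$ to an order equation via the kernel lemma and $|\mathrm{Hom}(X,A)|=|\overline{X}|$, compute $|\overline{q(U)}|$ with Proposition \ref{prop22} and $|\overline{U}|$ with Lemma \ref{lem23}(i),(ii), and then upgrade equality of orders to equality of groups with Lemma \ref{lem23}(iii); the only cosmetic difference is that you write $|U|=|G|\,|k_2(U)|$ and then use $|G|\,|k_2(U)|=|H|\,|k_1(U)|$, whereas the paper uses $|U|=|H|\,|k_1(U)|$ directly.
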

\begin{proof}
By the above lemma and the hypothesis, the surjectivity is equivalent with
 $$
\dfrac{|\overline{G\times H}|}{|\overline{q(U)}|} = | \overline{U}|.
  $$
  Proposition \ref{prop22} yields
  $$
|\overline{q(U)}|=|\overline{G/k_1(U)}|=\dfrac{|G|\cdot|G'\cap k_1(U)|}{|k_1(U)|\cdot|G'|}
  $$
  which yields
  $$
  \dfrac{|\overline{G\times H}|}{|\overline{q(U)}|} = \dfrac{|G|\cdot |H|\cdot |k_1(U)|\cdot |G'|}{|G'|\cdot |H'|\cdot |G|\cdot |G'\cap k_1(U)|} =\dfrac{|H|\cdot |k_1(U)|}{|H'|\cdot |G'\cap k_1(U)|}.
  $$
  Using the first and the second parts of Lemma \ref{lem23} to calculate $|\overline{U}|$  gives
  $$
  |\overline{U}| = \dfrac{|U|}{|U'|}=\dfrac{|p_2(U)|\cdot |k_1(U)|}{|p_2(U')|\cdot |k_1(U')|}=\dfrac{|H|\cdot |k_1(U)|}{|H'|\cdot |k_1(U')|}.
  $$
  So the surjectivity is equivalent with the equality of the orders of the groups $G'\cap k_1(U)$ and $k_1(U')$. The last part of Lemma \ref{lem23} yields the equality of the groups. The statement including $H'$ follows from symmetry.
  
\end{proof}
Below is a version of the above theorem in the case where $\pi$ does not include every prime divisor of $G$ and $H$. It can be obtained by replacing the orders with their $\pi$-parts.
\begin{corollary}{\label{cor2.7}}
Let $U\leq G\times H$ be a subdirect product of $G$ and $H$. Then $U$ is $A$-extensible if and only if for every $p\in\pi$, $k_1(U')$ contains a Sylow $p$-subgroup of $G'\cap k_1(U)$ or equivalently, $k_2(U')$ contains a Sylow $p$-subgroup of $H'\cap k_2(U)$.
\end{corollary}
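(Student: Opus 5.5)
We will rerun the counting argument of Theorem \ref{thm3.2}, keeping only $\pi$-parts of all orders. Fix $A$ satisfying Hypothesis \ref{hyp} for the set $\pi$. By \cite[Proposition 10.4]{BC18}, $|\mathrm{Hom}(B,A)|=|B|_\pi$ for every finite abelian group $B$. Every homomorphism from a finite group into $A$ factors through the abelianization, so
\[
|\mathrm{Hom}(G\times H,A)|=|\overline{G\times H}|_\pi,\qquad |\mathrm{Hom}(U,A)|=|\overline{U}|_\pi,\qquad |\mathrm{Hom}(q(U),A)|=|\overline{q(U)}|_\pi .
\]
The preceding lemma identifies $\ker(\rho)$ with $\mathrm{Hom}(q(U),A)$. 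Hence $|\mathrm{im}(\rho)|=|\overline{G\times H}|_\pi/|\overline{q(U)}|_\pi$, and $U$ is $A$-extensible exactly when this equals $|\overline{U}|_\pi$.

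Taking $\pi$-parts is multiplicative, so the computations in the proof of Theorem \ref{thm3.2} carry over unchanged. They use Proposition \ref{prop22} for $|\overline{q(U)}|$ and parts (i),(ii) of Lemma \ref{lem23} for $|\overline{U}|$. The condition then reduces to
\[
|G'\cap k_1(U)|_\pi=|k_1(U')|_\pi .
\]
By Lemma \ref{lem23}(iii), $k_1(U')\le G'\cap k_1(U)$, so the index $[G'\cap k_1(U):k_1(U')]$ is an integer. The equality of $\pi$-parts says this index is coprime to every $p\in\pi$. For a subgroup $K\le L$, the index $[L:K]$ is prime to $p$ if and only if $|K|_p=|L|_p$, which by Sylow's theorem holds if and only if $K$ contains a Sylow $p$-subgroup of $L$. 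This gives the first formulation.

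For the equivalent formulation in terms of $k_2$, we can use the symmetric computation, as in Theorem \ref{thm3.2}. More directly, both conditions are equivalent to $A$-extensibility, since the count of $|\overline{U}|$ may be done with either $i=1$ or $i=2$ in Lemma \ref{lem23}(i),(ii).

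The only point needing care is that a homomorphism $U\to A$ has finite image inside the torsion of $A$, so it lands in a finite subgroup $A[n]$ that is cyclic of order $n_\pi$. This is what justifies applying the cited proposition to each abelianization. Beyond that, the argument is routine bookkeeping with $\pi$-parts.
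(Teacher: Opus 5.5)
Your proposal is correct and follows the paper's own argument: the paper states only that the corollary comes from the proof of Theorem~\ref{thm3.2} by replacing every order with its $\pi$-part, and you spell out what that gives, namely the criterion $|G'\cap k_1(U)|_\pi=|k_1(U')|_\pi$ together with the Sylow step based on $k_1(U')\le G'\cap k_1(U)$ from Lemma~\ref{lem23}(iii). One small point: for the $k_2$ formulation, your ``more direct'' justification is incomplete on its own, because the symmetric count must also compute $|\overline{q(U)}|$ via $q(U)\cong H/k_2(U)$, not only $|\overline{U}|$ with $i=2$; the full symmetric computation you mention first does cover this.
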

\section{Extensibility via criteria on $q(U)$}
Having established a necessary and sufficient condition, we are ready to explore the classes of extensible subgroups. In general, it is much harder to describe $k_i(U')$ than $[k_i(U), p_i(U)]$ and $(p_i(U))'\cap k_i(U)$. In the following calculations, we will ignore the complicated term and simply check whether $[k_i(U), p_i(U)]=(p_i(U))'\cap k_i(U))$ holds, which by the inequality in the last part of Lemma \ref{lem23} gives the extensibility.

Given a group extension
$$
1\rightarrow K\rightarrow G\rightarrow G/K \rightarrow 1
$$
there is a five term exact sequence of homology groups
$$
H_2(G) \xrightarrow[]{\alpha} H_2(G/K)\xrightarrow[]{\beta} H_1(K)_{G/K}\xrightarrow[]{\gamma} H_1(G)\rightarrow H_1(G/K)\rightarrow 0
$$
obtained from the Hochschild-Serre spectral sequence \cite{HS53}. Recall that 
$$H_1(K)_{G/K} = \dfrac{K/K'}{<x\cdot {kK'} - kK' : x\in G/K, k\in K>}$$ and $x = gK$ acts on $kK'$ via conjugation by $g$.
The map $\alpha$ is called \textbf{coinflation}.
For a finite group $G$, let $M(G)$ denote its Schur multiplier, i. e., the second homology group $H_2(G)$.

We are now ready to prove Theorem \ref{mainthm1}.
\begin{proof}[Proof of Theorem \ref{mainthm1}]
	Let $U = \Delta(G,K,\varphi, L, H)$.
	Assuming the above notation, we have, $x\cdot {kK'} = gkg^{-1}K'$, hence if $x\cdot {k K'} = kK'$ then $gkg^{-1}k^{-1} \in K'$. Thus we get
	$$
	H_1(K)_{G/K} = \dfrac{K}{[K,G]}.
	$$
	Then 
	$$\mathrm{ker}(\gamma) = \lbrace k[K,G]: k\in K\cap G'\rbrace = \dfrac{K\cap G'}{[K,G]} = \mathrm{im}(\beta).$$
	The proof now follows from Corollary \ref{cor2.7}, the third part of Lemma \ref{lem23} and the fact that $$\dfrac{M(G/K)}{\mathrm{im}(\alpha)}\cong\mathrm{im}(\beta) = \dfrac{K\cap G'}{[K,G]}.$$
\end{proof}

The result below is a direct consequence of \cite[Corollary $2.1.3$]{K87}. We can immediately see that Proposition \ref{prop2.2} ensures that the below sufficient condition is preserved under the $\ast$-product, thus a functorial property. 
\begin{corollary}\label{cor4.1}
Let $G,H$ be finite groups $U\leq G\times H$ a subdirect product. Suppose that for each prime divisor $p$ of $|q(U)|$ the Sylow $p$-subgroups of $q(U)$ are cyclic. Then $U$ is extensible.
\end{corollary}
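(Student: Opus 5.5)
The plan is to reduce Corollary~\ref{cor4.1} to part (i) of Theorem~\ref{mainthm1}, applied one prime at a time. Extensibility means $A$-extensibility for every $A$ satisfying Hypothesis~\ref{hyp}, with $\pi$ an arbitrary set of primes. By Corollary~\ref{cor2.7}, $A$-extensibility holds if and only if, for every $p\in\pi$, the subgroup $k_1(U')$ contains a Sylow $p$-subgroup of $G'\cap k_1(U)$. This is a condition checked prime by prime. So it suffices to show that for every prime $p$ this Sylow containment holds, and that is exactly the content of $p$-extensibility in the proof of Theorem~\ref{mainthm1}. Hence I will show that the $p$-primary part of $M(q(U))$ vanishes for every prime $p$, i.e.\ that $M(q(U))=1$.

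\textbf{Step 1: the Schur multiplier is trivial.} For primes $p$ not dividing $|q(U)|$ there is nothing to do, since $M(q(U))$ is a finite group whose exponent divides $|q(U)|$. For a prime $p$ dividing $|q(U)|$, I use the standard fact (the content of \cite[Corollary 2.1.3]{K87}) that the $p$-primary component $M(Q)_p$ embeds into $M(P)$, where $P$ is a Sylow $p$-subgroup of $Q$. This holds because corestriction composed with restriction is multiplication by the index $[Q:P]$, which is prime to $p$. By hypothesis $P$ is cyclic, and cyclic groups have trivial Schur multiplier, since $H_2(C_n,\mathbb{Z})=0$. Therefore $M(q(U))_p=0$ for every $p$.

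\textbf{Step 2: conclude.} With $M(q(U))=1$, the proof of Theorem~\ref{mainthm1} gives
\[
\frac{G'\cap k_1(U)}{[k_1(U),G]}\cong \frac{M(q(U))}{\mathrm{im}(\alpha)}=1 .
\]
Here I use $p_1(U)=G$, since $U$ is subdirect. So $[k_1(U),G]=G'\cap k_1(U)$. The sandwich in Lemma~\ref{lem23}(iii) then forces $k_1(U')=G'\cap k_1(U)$. Theorem~\ref{thm3.2} and Corollary~\ref{cor2.7} now yield $A$-extensibility for every admissible $A$, that is, extensibility.

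The only step needing care is the transfer argument in Step 1. Since the paper cites \cite{K87} for it, I would simply invoke it rather than reprove it.
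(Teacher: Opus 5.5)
Your proof is correct and follows the paper's route. The paper derives the corollary in one line: Karpilovsky's Corollary 2.1.3 gives $M(q(U))=1$ when all Sylow subgroups of $q(U)$ are cyclic, and Theorem~\ref{mainthm1}(i) then applies at every prime. You additionally sketch the transfer argument behind the cited result, and you use Corollary~\ref{cor2.7} to show explicitly that $p$-extensibility at every prime amounts to extensibility, a step the paper leaves implicit.
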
 
\section{The case of containment of twisted diagonals }
In this section we will study the extensibility for subgroups of $G\times G$ which contain a twisted diagonal subgroup, that is, the subgroups $\Delta(G,\varphi)\leq G\times G$ given by
$$
\Delta(G,\varphi) = \lbrace (g,\varphi(g))| g\in G\rbrace
$$ 
for an automorphism $\varphi$ of $G$. Notice that the property of containment of such subgroups is preserved under the $\ast$-product. Another positive fact about them is that kernel subgroups of their commutator subgroups has a nice description which  significantly simplifies the checking of the extensibility. To establish this fact, we need the following lemma.
\begin{lemma}
  Let $G$ be a finite group, $X \trianglelefteq G \trianglerighteq Y$ such that $G = XY$. Then $G' = \langle X', [X,Y], Y'\rangle$.
\end{lemma}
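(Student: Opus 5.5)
We prove the two inclusions separately. Put $N=\langle X',[X,Y],Y'\rangle$. Clearly $N\leq G'$, since each generating set consists of commutators of elements of $G$. The real work is the reverse inclusion $G'\leq N$.

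First, $N$ is normal in $G$. Since $X\trianglelefteq G$, the subgroup $X'$ is characteristic in $X$ and hence normal in $G$; likewise $Y'\trianglelefteq G$. For $[X,Y]$, recall that the commutator subgroup of two normal subgroups is normal: for $g\in G$ we have $[x,y]^g=[x^g,y^g]$ with $x^g\in X$ and $y^g\in Y$. So $N$, being generated by normal subgroups, is normal.

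Next, pass to $\overline{G}=G/N$. It suffices to show $\overline{G}$ is abelian, because then $G'\leq N$. Write $\overline{X},\overline{Y}$ for the images of $X,Y$. Since $X'\leq N$, the image $\overline{X}=XN/N$ is abelian, and similarly $\overline{Y}$ is abelian. Since $[X,Y]\leq N$, every element of $\overline{X}$ commutes with every element of $\overline{Y}$. Finally $\overline{G}=\overline{X}\,\overline{Y}$ because $G=XY$. A product of two abelian subgroups that centralize each other is abelian: for $x_1y_1,x_2y_2$ with $x_i\in\overline{X}$, $y_i\in\overline{Y}$, we get $x_1y_1x_2y_2=x_1x_2y_1y_2=x_2x_1y_2y_1=x_2y_2x_1y_1$. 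Hence $\overline{G}$ is abelian and $G'\leq N$.

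The only point needing care is the normality of $[X,Y]$ (and so of $N$), which is what allows us to form the quotient. It follows directly from the conjugation identity $[x,y]^g=[x^g,y^g]$ and the normality of $X$ and $Y$, so there is no real obstacle; the argument is elementary and does not use finiteness of $G$.
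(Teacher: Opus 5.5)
Your proof is correct, but it takes a different route from the paper.

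The paper argues element-wise. Writing $u=x_1y_1$, $v=x_2y_2$ and expanding with $[a,bc]=[a,c][a,b]^c$ and $[ab,c]=[a,c]^b[b,c]$, it obtains
\[
[x_1y_1,x_2y_2]=[x_1,y_2]^{y_1}[x_1,x_2]^{y_2y_1}[y_1,y_2][y_1,x_2]^{y_2},
\]
and then uses conjugation-invariance to place each factor in $X'$, $[X,Y]$ or $Y'$. You instead pass to $G/N$ and observe that it is a product of two abelian subgroups that centralize each other.

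What your version buys:
\begin{itemize}
\item It avoids commutator-identity bookkeeping, and with it any dependence on commutator and conjugation conventions.
\item It makes explicit what the paper leaves tacit: the trivial inclusion $N\leq G'$, and the fact that $X'$, not only $[X,Y]$, must be stable under conjugation. The paper needs this for the factor $[x_1,x_2]^{y_2y_1}$ but only mentions $[X,Y]^G=[X,Y]$.
\end{itemize}

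What the paper's expansion buys: it is constructive, and slightly more economical in hypotheses. The only conjugations that occur are by elements of $Y$, so the computation goes through when only $X$ is assumed normal. This matters downstream. In Proposition~\ref{thm5.2} the lemma is applied to $U$ with $X=k_1(U)\times 1$ and $Y=\Delta(G,\varphi)$, and $\Delta(G,\varphi)$ is normal in $U$ only when $k_1(U)\leq Z(G)$.

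Your quotient argument uses the normality of both $X$ and $Y$ to get $N\trianglelefteq G$. To cover the one-normal-factor case you would need two further observations:
\begin{itemize}
\item $\langle X',[X,Y]\rangle=[X,G]$;
\item $N=[X,G]Y'$ is still normal in $G$ when only $X$ is normal.
\end{itemize}
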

\begin{proof}
  Let $u,v\in G$ and write $u=x_1y_1, v = x_2y_2$. Using the commutator identities $[a,bc] = [a,c][a,b]^c$ and $[ab,c]=[a,c]^b[b,c]$ we get
  \begin{align*}
    [x_1y_1,x_2y_2]&=[x_1,x_2y_2]^{y_1}[y_1,x_2y_2]\\
    &=([x_1,y_2][x_1,x_2]^{y_2})^{y_1}[y_1,y_2][y_1,x_2]^{y_2}\\
    &=[x_1,y_2]^{y_1}[x_1,x_2]^{y_2y_1}[y_1,y_2][y_1,x_2]^{y_2}.
  \end{align*}
The normality of $X$ and $Y$ implies that $[X,Y]^G=[X,Y]$ hence every term of the product for the commutator $[u,v]$ is from the claimed generating set.
  
\end{proof} 
\begin{proposition}\label{thm5.2}
  Let $\Delta(G,\varphi)\leq U\leq G\times G$ for some $\varphi \in \mathrm{Aut}(G)$. Then for $i\in\lbrace 1,2\rbrace$ one has
  $$k_i(U') = [k_i(U),G].$$
\end{proposition}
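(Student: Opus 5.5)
Since $\Delta(G,\varphi)\leq U$, both projections of $U$ equal $G$, so $U$ is a subdirect product of $G$ and $G$. I will treat $i=1$; the case $i=2$ is the same argument with the coordinates swapped and $\varphi^{-1}$ in place of $\varphi$. The inclusion $[k_1(U),G]\leq k_1(U')$ is already the first part of the last assertion of Lemma \ref{lem23}, so only $k_1(U')\leq [k_1(U),G]$ needs proof. The plan is to write $U$ as a product of two normal subgroups and compute $U'$ with the preceding lemma.

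\textbf{Decomposition.} Put $D=\Delta(G,\varphi)$ and $X=k_1(U)\times 1\leq U$. Given $(g,h)\in U$, we have $(g,\varphi(g))\in D\leq U$, so $(1,\varphi(g)^{-1}h)\in U$. Hence $U=D\cdot(1\times k_2(U))$, and symmetrically $U=(k_1(U)\times 1)\cdot D=XD$. The subgroup $X$ is normal in $U$ because $k_1(U)\trianglelefteq G=p_1(U)$. However, $D$ need not be normal in $U$, so the lemma does not apply to $X$ and $D$ directly.

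\textbf{Main obstacle.} To get around this, I will work modulo $N=[X,U]\trianglelefteq U$, which is normal as a commutator of two normal subgroups. In $U/N$ the image of $X$ is central, and $U/N=(XN/N)(DN/N)$. Since $XN/N$ is central, $DN/N$ is normal in $U/N$, so the lemma now applies. It gives that $(U/N)'$ is generated by $(XN/N)'$, $[XN/N,DN/N]$ and $(DN/N)'$. The first two are trivial because $X$ is central modulo $N$, so $(U/N)'=(DN/N)'$. Lifting back to $U$, this says $U'\leq D'N$. A direct alternative is to expand $[x_1d_1,x_2d_2]$ using the commutator identities from the lemma's proof, noting that every factor involving an $x_j$ lies in $N$, which is normal in $U$.

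\textbf{Reading off the kernel.} We have $N=[k_1(U)\times 1,U]$, and since $p_1(U)=G$,
$$N=\{([k,g],1)\}\text{-generated}=[k_1(U),G]\times 1.$$
Also $D'=\{(c,\varphi(c)) : c\in G'\}$. Take $(a,1)\in U'\leq D'N$ and write $(a,1)=(c,\varphi(c))(n,1)$ with $c\in G'$ and $n\in[k_1(U),G]$. Comparing second coordinates gives $\varphi(c)=1$, so $c=1$ because $\varphi$ is injective. Therefore $a=n\in[k_1(U),G]$, which proves $k_1(U')\leq[k_1(U),G]$ and completes the argument.
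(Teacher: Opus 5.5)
Your proof is correct and follows essentially the paper's route: decompose $U=(k_1(U)\times 1)\Delta(G,\varphi)$, compute $U'$ with the commutator-generation lemma, and read off $k_1(U')$. Two steps the paper's proof passes over quickly are made rigorous in yours: passing to $U/[X,U]$ supplies the normality of $\Delta(G,\varphi)$ that the lemma formally requires but the paper never checks, and the injectivity of $\varphi$ disposes of the $\Delta(G,\varphi)'$ contribution.
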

\begin{proof}
  From the last part of Lemma \ref{lem23}, we obtain $[k_i(U),G]\leq k_i(U')$. For the reverse inclusion, notice that the assumption on $U$ implies that if $(x,y)\in U$ then $(x,y)=(kg,\varphi(g))$ for some $k\in k_1(U),g\in G$ hence $U=(k_1(U)\times 1)\Delta(G,\varphi)$. Now let $k\in k_1(U')$. Observe that applying the above lemma to $U$ implies either $k\in (k_1(U))'$ or  $k\in [k_1(U)\times 1, \Delta(G, \varphi)]$ hence $k\in [k_1(U),G]$.
\end{proof}
The next result provides a useful way to avoid inextensible subdirect products.
\begin{corollary}
Let $\Delta(G,\varphi)\leq U\leq G\times G$ for some $\varphi \in \mathrm{Aut}(G)$. Also suppose that for some $i\in \lbrace 1,2\rbrace$, we have
\begin{enumerate}[label=(\roman*)]
\item  $k_i(U)\leq Z(G)$,
\item $1 < k_i(U)\cap G'$.
\end{enumerate}
Then $U$ is inextensible.
\end{corollary}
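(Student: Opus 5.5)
By Proposition \ref{thm5.2}, $k_i(U') = [k_i(U),G]$, and hypothesis (i) says $k_i(U)\leq Z(G)$, so $[k_i(U),G]=1$. Hence $k_i(U')=1$. Hypothesis (ii) gives $1<G'\cap k_i(U)$. Note that $U$ is a subdirect product, since it contains $\Delta(G,\varphi)$ and so both projections are all of $G$. Theorem \ref{thm3.2} and Corollary \ref{cor2.7} therefore apply.

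To show that $U$ is inextensible, I will exhibit one abelian group $A$ satisfying Hypothesis \ref{hyp} for which $U$ is not $A$-extensible. Pick a prime $p$ dividing $|k_i(U)\cap G'|$, which exists by (ii). Take $A$ to be a group satisfying the hypothesis for a set $\pi$ containing $p$. Concretely, I would use $\mathbb{C}^\times$ with $\pi$ the set of all primes, or the Prüfer group $\mathbb{Z}(p^\infty)$ with $\pi=\{p\}$.

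By Corollary \ref{cor2.7}, $A$-extensibility would require $k_i(U')=1$ to contain a Sylow $p$-subgroup of $G'\cap k_i(U)$. That Sylow subgroup is nontrivial because $p$ divides $|G'\cap k_i(U)|$, so the condition fails. Equivalently, Theorem \ref{thm3.2} fails because $G'\cap k_i(U)\neq 1 = k_i(U')$. Hence $\rho$ is not surjective and $U$ is not $A$-extensible. In particular it is neither extensible nor $p$-extensible.

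The only step needing care is reading the index $i$ correctly. Corollary \ref{cor2.7} is phrased with $G'$ for both coordinates since here $H=G$, and it states the condition equivalently for $i=1$ and $i=2$. So it suffices to verify the failure for whichever $i$ the hypotheses hold. Otherwise the argument is a direct substitution and I do not expect a real obstacle.
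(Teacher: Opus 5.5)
Your proposal is correct and is essentially the paper's proof: Proposition \ref{thm5.2} together with (i) gives $k_i(U')=[k_i(U),G]=1$, and then (ii) with Theorem \ref{thm3.2} shows that $\rho$ fails to be surjective. Your extra steps only make explicit what the paper leaves implicit. These are checking that $U$ is subdirect, and exhibiting a concrete $A$ such as $\mathbb{C}^\times$ or $\mathbb{Z}(p^\infty)$ via Corollary \ref{cor2.7}. They also give the slightly stronger conclusion that $U$ is not $p$-extensible for any prime $p$ dividing $|k_i(U)\cap G'|$.
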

\begin{proof}
	The first condition of the hypothesis yields $[k_i(U), G] = 1$, together with the above proposition we get $k_i(U') = 1$. The second condition and Theorem \ref{thm3.2} yields the result.
\end{proof}
We now examine how the extensibility behaves under the $\ast$-product. The next lemma follows from direct calculation.
\begin{lemma}\label{lem5.4}
Suppose that $\Delta(G,\varphi)\leq U\leq G\times G \geq V\geq \Delta(G,\psi)$ for some $\varphi,\psi \in  \mathrm{Aut}(G)$. Then
\begin{enumerate}[label=\textup{(\roman*)}]
\item $k_2(U) = \varphi(k_1(U))$,
\item $k_1(U\ast V) = k_1(U)\varphi^{-1}(k_1(V))$,
\item $k_2(U\ast V) = k_2(V)\psi(k_2(U)).$
\end{enumerate}
\end{lemma}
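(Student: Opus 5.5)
The plan is to compute everything directly from the definitions. The only tool needed is that $\Delta(G,\varphi)\leq U$ lets us move between a pair $(f,x)\in U$ and a pair with trivial coordinate, by multiplying by a suitable element $(g,\varphi(g))$. Note that $U$ and $V$ are subdirect products, since each contains a twisted diagonal. Hence all projections equal $G$, and the $\ast$-product is simply
\[
U\ast V=\lbrace (u,v)\in G\times G \mid \exists x\in G,\ (u,x)\in U,\ (x,v)\in V\rbrace .
\]

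For (i), take $k\in k_1(U)$. Then $(k,1)^{-1}(k,\varphi(k))=(1,\varphi(k))\in U$, so $\varphi(k_1(U))\leq k_2(U)$. Conversely, for $h\in k_2(U)$, the element $(\varphi^{-1}(h),h)(1,h)^{-1}=(\varphi^{-1}(h),1)$ lies in $U$. This gives $\varphi^{-1}(k_2(U))\leq k_1(U)$, and hence equality.

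For (ii), by definition $f\in k_1(U\ast V)$ if and only if there is an $x$ with $(f,x)\in U$ and $(x,1)\in V$, that is, with $x\in k_1(V)$. The key observation is that for fixed $x\in G$,
\[
(f,x)\in U \iff (f,x)(\varphi^{-1}(x),x)^{-1}=(f\varphi^{-1}(x)^{-1},1)\in U \iff f\in k_1(U)\varphi^{-1}(x).
\]
Taking the union over $x\in k_1(V)$ gives $k_1(U\ast V)=k_1(U)\varphi^{-1}(k_1(V))$. This is a subgroup because $k_1(U)\trianglelefteq p_1(U)=G$.

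Part (iii) is the mirror argument, this time using $\Delta(G,\psi)\leq V$. We have $h\in k_2(U\ast V)$ if and only if there is an $x\in k_2(U)$ with $(x,h)\in V$. Moreover $(x,h)\in V$ if and only if $(x,h)(x,\psi(x))^{-1}=(1,h\psi(x)^{-1})\in V$, i.e. $h\in k_2(V)\psi(x)$. Taking the union over $x\in k_2(U)$ gives $k_2(V)\psi(k_2(U))$.

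There is no genuine obstacle here. The only point needing care is to use the correct twisted diagonal on each side: $\varphi$ when translating inside $U$, and $\psi$ when translating inside $V$. One must also note that normality of the kernels makes the resulting products subgroups.
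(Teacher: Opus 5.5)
Your proposal is correct and is the same approach as the paper. The paper gives no written proof and only says the lemma "follows from direct calculation"; your argument carries out exactly that calculation. You translate by elements of $\Delta(G,\varphi)\leq U$ and $\Delta(G,\psi)\leq V$ to reduce membership in $U$ and $V$ to membership in the kernel subgroups, and each step checks out, including the observation that both subgroups are subdirect products.
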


\begin{lemma}\label{lem5.5}
Let $G$ be a finite group, $U,V\leq G\times G$ such that $U\geq \Delta(G) \leq V$. Then
$$
\dfrac{k_1((U\ast V)')}{k_1(U)\cap k_1((U\ast V)')}\cong \dfrac{k_1(V')}{k_2(U)\cap k_1(V')}
$$
and
$$
\dfrac{k_2((U\ast V)')}{k_2(V)\cap k_2((U\ast V)')} \cong \dfrac{k_2(U')}{k_1(V)\cap k_2(U')}.
$$
\end{lemma}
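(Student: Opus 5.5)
The plan is to reduce everything to Proposition~\ref{thm5.2} and Lemma~\ref{lem5.4} in the special case $\varphi=\psi=\mathrm{id}$, where $\Delta(G)=\Delta(G,\mathrm{id})$. First I observe that $U\ast V$ again contains $\Delta(G)$: for $g\in G$ we have $(g,g)\in U$ and $(g,g)\in V$, hence $(g,g)\in U\ast V$. Moreover $U$, $V$ and $U\ast V$ are all subdirect products of $G\times G$, since their projections contain the projections of $\Delta(G)$.

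Next I record the following consequences of Lemma~\ref{lem5.4} with trivial twists:
$$k_2(U)=k_1(U),\qquad k_2(V)=k_1(V),$$
$$k_1(U\ast V)=k_1(U)k_1(V),\qquad k_2(U\ast V)=k_2(V)k_2(U).$$
All these kernel subgroups are normal in $G$, because $k_i(W)\trianglelefteq p_i(W)=G$.

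Applying Proposition~\ref{thm5.2} to $U$, $V$ and $U\ast V$ gives
$$k_1((U\ast V)')=[k_1(U)k_1(V),G]$$
and $k_1(V')=[k_1(V),G]$. For normal subgroups $M,N\trianglelefteq G$ the standard identity $[MN,G]=[M,G][N,G]$ holds. It follows from $[mn,g]=[m,g]^n[n,g]$ together with the normality of $[M,G]$, and it is the only computational step needing a check. With this identity I obtain
$$X:=k_1((U\ast V)')=[K,G]\,k_1(V'),\qquad\text{where } K=k_1(U)=k_2(U).$$

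Now the isomorphism is the second isomorphism theorem. Since $[K,G]\le K$ (as $K$ is normal) and $[K,G]\le X$, we have $[K,G]\le K\cap X$. Hence
$$X=(K\cap X)\,k_1(V'),$$
and $K\cap X$ is normal in $X$, being an intersection of normal subgroups of $G$. Therefore
$$X/(K\cap X)\cong k_1(V')/\bigl(k_1(V')\cap K\cap X\bigr)=k_1(V')/\bigl(k_1(V')\cap K\bigr).$$
Since $K=k_2(U)$, this is exactly the first claimed isomorphism.

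The second isomorphism is handled symmetrically. Set $L=k_2(V)=k_1(V)$; then
$$k_2((U\ast V)')=[L,G]\,[k_2(U),G]=[L,G]\,k_2(U').$$
The same argument with $L$ in place of $K$ and $k_2(U')$ in place of $k_1(V')$ yields the isomorphism with quotient $k_2(U')/\bigl(k_1(V)\cap k_2(U')\bigr)$.

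The main obstacle is essentially bookkeeping: making sure the identifications $k_2(U)=k_1(U)$ and $k_1(V)=k_2(V)$ from Lemma~\ref{lem5.4}(i) are used in the right places, and justifying the commutator identity $[MN,G]=[M,G][N,G]$.
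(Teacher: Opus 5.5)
Your proof is correct, but it takes a different route from the paper.

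\textbf{The paper's route.} The paper never computes $k_1((U\ast V)')$. Instead it builds a map $k_1((U\ast V)')\to k_1(V')/(k_2(U)\cap k_1(V'))$ elementwise:
\begin{enumerate}[label=(\roman*)]
\item Write $(A,1)\in (U\ast V)'$ as a product of commutators $[(a_i,c_i),(b_i,d_i)]$.
\item Choose middle coordinates $x_i,y_i$ with $(a_i,x_i),(b_i,y_i)\in U$ and $(x_i,c_i),(y_i,d_i)\in V$.
\item Send $A$ to the class of $X=\prod_i[x_i,y_i]\in k_1(V')$.
\end{enumerate}
Well-definedness and the kernel $k_1(U)\cap k_1((U\ast V)')$ both come from $(A,X)\in U$. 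Surjectivity comes from lifting commutator words in $k_1(V')$ back through $U$.

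This argument does not use Proposition~\ref{thm5.2} or Lemma~\ref{lem5.4}. The hypothesis $\Delta(G)\le U$ enters only in the surjectivity step, to lift elements of $G$ through the second coordinate of $U$. The hypothesis $\Delta(G)\le V$ is not used at all. So that construction goes through under considerably weaker assumptions.

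\textbf{Your route.} You use the full hypothesis: $\Delta(G)$ lies in $U$ and $V$, hence in $U\ast V$. Via Proposition~\ref{thm5.2}, Lemma~\ref{lem5.4} and $[MN,G]=[M,G][N,G]$, this gives the explicit formulas
\[
k_1((U\ast V)')=[k_1(U),G]\,k_1(V'), \qquad k_2((U\ast V)')=[k_2(V),G]\,k_2(U').
\]
After that, the lemma is just the second isomorphism theorem.

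\textbf{What your route buys.} It is shorter, and it avoids the bookkeeping about independence of the chosen commutator representation. It also yields strictly more information. In particular, it makes the order identity
\[
|[NM,G]|=\frac{|N\cap[NM,G]|\cdot|[M,G]|}{|N\cap[M,G]|}
\]
in Theorem~\ref{thm5.6} immediate. There the paper combines Lemma~\ref{lem5.5} with exactly your ingredients anyway: Proposition~\ref{thm5.2} and $[NM,G]=[N,G][M,G]$.
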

\begin{proof}
We will show the first isomorphism, the second can be derived from symmetry.

For ease of notation, set $K = k_1((U\ast V)')$. Let $A = \prod_{i=1}^n [a_i,b_i]\in K$ where $n$ is a positive integer, for each $i$, there exists $x_i,y_i,c_i,d_i\in G$ such that $(a_i,x_i),(b_i,y_i)\in U, (x_i,c_i),(y_i,d_i)\in V$ and $\prod_{i=1}^n[c_i,d_i] = 1$. Let $X = \prod_{i=1}^n[x_i,y_i]$. Notice that $X\in k_1(V')$. Define
$$\varphi : K\rightarrow \dfrac{k_1(V')}{k_2(U)\cap k_1(V')}$$ as 
$$
\varphi(A) = X(k_2(U)\cap k_1(V')).
$$
To establish the well-definedness, Let $ \widetilde{X}$ be such that $(A,X),(A,\widetilde{X})\in U$ and $\widetilde{X}\in k_1(V')$. There exists $k\in k_2(U)$ such that $X = k\widetilde{X}$. Since $X,\widetilde{X}\in k_1(V')$ we get $k\in k_1(V')$. Thus $\varphi$ is well-defined. The fact that $\varphi$ is a homomorphism is clear. To calculate the kernel, observe that if $X\in k_2(U)\cap k_1(V')$, since $(A,X)\in U$, we must have $A\in k_1(U)$. 

To see the surjectivity, let $v\in k_1(V')$ and write $v= \prod_{i=1}^n[z_i,t_i]$ for some $z_i, t_i\in G$. Since $U\geq \Delta(G)$, there exists $e_i,f_i\in G$ such that $(e_i, z_i), (f_i, t_i) \in U$. Letting $A = \prod_{i=1}^n[e_i,f_i]$ we see that $A\in k_1((U\ast V)')$ and $\varphi(A) = v$. The proof is now complete.
\end{proof}
As it can be interpreted from the following theorem, the $\ast$-product need not preserve the extensibility. However we have a useful condition to be able to tell when it does.
\begin{theorem}\label{thm5.6}
Let $G$ be a finite group, $U,V\leq G\times G$ and $U\geq \Delta(G)\leq V$. Suppose that $U$ and $V$ are extensible. Then $U\ast V$ is extensible if and only if 
$$k_i(U\ast V)\cap G' =(k_i(U)\cap G')(k_i(V)\cap G')$$ 
for $i\in \lbrace 1,2\rbrace$.
\end{theorem}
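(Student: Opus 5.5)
The plan is to reduce everything to the criterion of Theorem \ref{thm3.2}, combined with the description of the commutator kernels from Proposition \ref{thm5.2}. Set $W=U\ast V$. Since $\Delta(G)\leq U$ and $\Delta(G)\leq V$, every $(g,g)$ lies in $W$ (take $x=g$), so $\Delta(G)\leq W$. In particular $W$ is a subdirect product of $G\times G$ containing a twisted diagonal, with $\varphi=\psi=\mathrm{id}$. Hence Proposition \ref{thm5.2} applies to all three subgroups $U$, $V$ and $W$, giving
$$k_i(X')=[k_i(X),G]\quad\text{for } X\in\{U,V,W\},\ i\in\{1,2\}.$$

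First I will rephrase extensibility. By Theorem \ref{thm3.2} applied to an $A$ whose $\pi$ contains all primes (e.g.\ $A=\mathbb{C}^\times$), together with Corollary \ref{cor2.7} for general $\pi$, a subdirect product $X$ is extensible if and only if $G'\cap k_i(X)=k_i(X')$. Indeed, full equality gives every Sylow condition, and conversely the all-primes case forces equality. So $X$ is extensible exactly when $k_i(X)\cap G'=[k_i(X),G]$, and by Theorem \ref{thm3.2} the choice of $i$ does not matter.

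Next, Lemma \ref{lem5.4} with $\varphi=\psi=\mathrm{id}$ gives three facts:
$$k_2(U)=k_1(U),\qquad k_2(V)=k_1(V),\qquad k_i(W)=k_i(U)k_i(V).$$
The key group-theoretic step is the identity
$$[KL,G]=[K,G][L,G]$$
for normal subgroups $K,L\trianglelefteq G$; note that the $k_i$ are normal in $G$ because the groups are subdirect products. The inclusion $\supseteq$ is clear. For $\subseteq$, I will use $[kl,g]=[k,g]^l[l,g]$ and the normality of $[K,G]$ in $G$. This shows every generator of $[KL,G]$ lies in $[K,G][L,G]$, and the latter is a subgroup since both factors are normal. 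This is the only spot needing a small computation, and I expect it to be the main (mild) obstacle.

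Combining, and using the extensibility of $U$ and $V$, we get
$$k_i(W')=[k_i(U)k_i(V),G]=[k_i(U),G][k_i(V),G]=(k_i(U)\cap G')(k_i(V)\cap G').$$
Hence $W$ is extensible, i.e.\ $k_i(W)\cap G'=k_i(W')$, if and only if $k_i(U\ast V)\cap G'=(k_i(U)\cap G')(k_i(V)\cap G')$. This holds for each $i$, and the two indices give equivalent conditions by Theorem \ref{thm3.2}, so requiring it for both $i\in\{1,2\}$ is the same as requiring it for one, which proves the theorem. Lemma \ref{lem5.5} should not be needed for this argument.
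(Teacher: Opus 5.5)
Your proof is correct. It uses the same ingredients as the paper: Proposition \ref{thm5.2}, Lemma \ref{lem5.4} with trivial twists, the identity $[NM,G]=[N,G][M,G]$ for normal $N,M$, and the substitution $[N,G]=N\cap G'$, $[M,G]=M\cap G'$ coming from extensibility of $U$ and $V$. The difference is in how they are organized.

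The paper (with $N=k_1(U)$, $M=k_1(V)$) first combines Lemma \ref{lem5.5} with Proposition \ref{thm5.2} to get the order formula $|[NM,G]|=|N\cap[NM,G]|\cdot|[M,G]|/|N\cap[M,G]|$. It then uses the commutator identity and Dedekind's modular law to show $|k_1((U\ast V)')|=|(N\cap G')(M\cap G')|$, and concludes by comparing orders inside $NM\cap G'$.

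You instead note that $[NM,G]=[N,G][M,G]=(N\cap G')(M\cap G')$ is already an equality of subgroups. This gives $k_i((U\ast V)')=(k_i(U)\cap G')(k_i(V)\cap G')$ directly, so Lemma \ref{lem5.5} and the order count are not needed. In fact, once the paper invokes $[NM,G]=[N,G][M,G]$, its detour through Lemma \ref{lem5.5} is redundant.

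Your write-up also makes explicit several points the paper leaves tacit:
\begin{enumerate}[label=(\roman*)]
\item $\Delta(G)\le U\ast V$, so Proposition \ref{thm5.2} applies to $U\ast V$ itself.
\item Extensibility for all $\pi$ is equivalent to the full equality $k_i(X)\cap G'=k_i(X')$, via $A=\mathbb{C}^\times$ and Corollary \ref{cor2.7}.
\item The commutator identity is proved rather than assumed.
\end{enumerate}
What the paper's route offers in return is the order relation from Lemma \ref{lem5.5}, which holds without assuming $U$ and $V$ are extensible. That relation plays no essential role in this theorem.
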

\begin{proof}
We shall give the proof for $i =1$, the other case follows from symmetry. For ease of notation set $k_1(U) = N, k_1(V) = M$. By Lemma \ref{lem5.4} we get $k_1(U\ast V) = NM$. 
Lemma \ref{lem5.5} together with Proposition \ref{thm5.2} yields
$$
|k_1((U\ast V)')| = |[NM, G]| = \dfrac{|N\cap [NM,G]|\cdot |[M,G]|}{|N\cap [M,G]|}.
$$ 
Since $N$ and $M$ are normal subgroups, we have $[NM, G]=[N,G][M,G]$. By the last part of Lemma \ref{lem23}, the extensibility of $U$ and $V$ implies that we can write $[N, G] = (N\cap G')$ and $[M, G] = (M\cap G')$ and get
\begin{align*}
|[NM, G]| &= \dfrac{|N\cap (N\cap G')(M\cap G')|\cdot |M\cap G'|}{|N\cap M\cap G'|}\\
&=\dfrac{|N\cap G'|\cdot |M\cap G'|}{|N\cap M\cap G'|}=|(N\cap G')(M\cap G')|
\end{align*}
which finishes the proof.
\end{proof}
\section*{Acknowledgements}
The author would like to thank the anonymous referee for their careful reading of the manuscript and helpful comments.
{}

\end{document}